\documentclass[12pt]{article}
\usepackage{amssymb}
\usepackage{mathrsfs}
\usepackage[hypertex]{hyperref}
\usepackage{amsfonts}
\usepackage{graphicx}
\usepackage{latexsym,amsmath,amssymb,amsfonts,amsthm}

\newcommand{\bcen}{\begin{center}}
\newcommand{\ecen}{\end{center}}
\newtheorem{theorem}{Theorem}[section]

\newtheorem{corollary}[theorem]{Corollary}

\newtheorem{remark}[theorem]{Remark}

\newtheorem{defn}[theorem]{Definition}

\begin{document}
\setcounter{page}{1}
\title{Eigenvalue comparison theorems for the Witten-Laplacian and the weighted $p$-Laplacian on complete manifolds with a modified Ricci
curvature bounded from below}
\author{Ruifeng Chen$^{1}$,~~ Qiyue Ma$^{1}$, ~~Jing Mao$^{1,2,\ast}$}

\date{}
\protect \footnotetext{\!\!\!\!\!\!\!\!\!\!\!\!{~MSC 2020:
58C40, 58J50, 35P15.}\\
{Key Words: A modified Ricci curvature, spherically symmetric
manifolds, Witten-Laplacian, weighted  $p$-Laplacian,
eigenvalues, eigenvalue comparison theorems.}\\
{$\ast$ Corresponding author} }
\maketitle ~~~\\[-15mm]

\begin{center}
{\footnotesize $^{1}$Faculty of Mathematics and Statistics,\\
 Key Laboratory of Applied
Mathematics of Hubei Province, \\
Hubei University, Wuhan 430062, China\\
$^{2}$Key Laboratory of Intelligent Sensing System and Security
(Hubei
University), Ministry of Education\\
Email: jiner120@163.com
 }
\end{center}


\begin{abstract}
In this paper, for complete manifolds with a modified Ricci
curvature bounded from below, we can successfully establish
Cheng-type eigenvalue comparison theorems for the first Dirichlet
eigenvalues of the Witten-Laplacian and the weighted $p$-Laplacian
on geodesic balls of these manifolds.
 \end{abstract}


\section{Introduction}
\renewcommand{\thesection}{\arabic{section}}
\renewcommand{\theequation}{\thesection.\arabic{equation}}
\setcounter{equation}{0}

\subsection{Motivation}

The study of comparison theorems between well-investigated geometric
objects and less-investigated ones is a hot topic in Geometry.
Sometimes, comparison theorems play a fundamental role in seeking
geometric and topological properties of manifolds with curvature
constraints.

For a given complete Riemannian $n$-manifold $M^{n}$, $n\geq2$, if
the Ricci curvature of $M^{n}$ satisfies
$\mathrm{Ric}(M^n)\geq(n-1)K$ for some constant $K$,  S. Y. Cheng
\cite{CSY1} showed that for arbitrary $q\in M^{n}$, the comparison
\begin{eqnarray} \label{ECT-1}
\lambda_{1}(B(q,r_0))\leq\lambda_{1}\left(\mathcal{B}_{n}(K,r_0)\right)
\end{eqnarray}
holds, where the equality in (\ref{ECT-1}) holds if and only if
$B(q,r_0)$ is isometric to $\mathcal{B}_{n}(K,r_0)$. Here $B(q,r_0)$
denotes the geodesic ball on $M^{n}$, with center $q$ and radius
$r_0$, while $\mathcal{B}_{n}(K,r_0)$ stands for a geodesic ball of
radius $r_0$ in the space $n$-form $\mathbb{M}^{n}(K)$ with constant
sectional curvature $K$. Besides, $\lambda_{1}(\cdot)$ denotes the
first Dirichlet eigenvalue of the Laplacian on given geometric
objects. By the way, one does not need to give the information about
the center of the geodesic ball $\mathcal{B}_{n}(K,r_0)$ since
$\mathbb{M}^{n}(K)$ is two-points homogeneous. If the sectional
curvature of $M^{n}$ satisfies $\mathrm{Sec}(M^n)\leq K$, S. Y.
Cheng \cite{CSY2} showed that for $q\in M^{n}$, the comparison
\begin{eqnarray} \label{ECT-2}
\lambda_{1}(B(q,r_0))\geq\lambda_{1}\left(\mathcal{B}_{n}(K,r_0)\right)
\end{eqnarray}
holds, where $B(q,r_0)$ should be within the cut locus of the point
$q$, and the equality in (\ref{ECT-2}) holds if and only if
$B(q,r_0)$ is isometric to $\mathcal{B}_{n}(K,r_0)$. These two
comparisons, together  with related rigidity conclusions, are called
``\emph{Cheng's eigenvalue comparison theorems}" in some
literatures. In our viewpoints, the key steps (in \cite{CSY1, CSY2})
used for the derivation of Cheng's eigenvalue comparison theorems
(\ref{ECT-1})-(\ref{ECT-2}) are actually the followings:
\begin{itemize}
\item The suitable construction of trial functions, which highly
depends on eigenfunctions of the first Dirichlet eigenvalue of the
Laplacian on $\mathcal{B}_{n}(K,r_0)$.

\item The usage of Bishop's volume comparison theorems (see e.g. \cite[Chapter III]{IC}).
\end{itemize}
Eigenvalue comparisons (\ref{ECT-1})-(\ref{ECT-2}) not only give the
rigidity characterization for the equality case but also supply the
accurate (upper or lower) bound for the eigenvalue
$\lambda_{1}(B(q,r_0))$. This is because, once the accurate values
of the curvature $K$ and the radius $r_0$ were fixed,
$\lambda=\lambda_{1}\left(\mathcal{B}_{n}(K,r_0)\right)$ can be
calculated accurately and is actually determined by the system
\begin{eqnarray} \label{firste-1}
\left\{
\begin{array}{lll}
\frac{d^{2}\varphi(t)}{dt^{2}}+(n-1)\frac{S'_{K}(t)}{S_{K}(t)}\frac{d\varphi(t)}{dt}+\lambda\cdot\varphi(t)=0 \qquad &\mathrm{in}~\left(0,r_0\right),  \\[1mm]
\varphi'(0)=0,~\varphi\left(r_0\right)=0, \\[0.5mm]
 \varphi|_{(0,r_0)}>0,
\end{array} \right.
\end{eqnarray}
with
\begin{eqnarray*}
S_{K}(t)=\left\{
\begin{array}{llll}
\frac{\sin\sqrt{K}t}{\sqrt{K}}, & \quad K>0,\\
 t, & \quad K=0, \\
\frac{\sinh\sqrt{-K}t}{\sqrt{-K}}, & \quad K<0.
\end{array}
\right.
\end{eqnarray*}
Specially, when $K=0$, $\mathbb{M}^{n}(K)$ degenerates into the
Euclidean $n$-space $\mathbb{R}^n$, solving (\ref{firste-1})
directly yields that
$$\lambda_{1}\left(\mathcal{B}_{n}(0,r_0)\right)=\left(\frac{j_{\frac{n}{2}-1,1}}{r_0}\right)^{2},$$
where $j_{\frac{n}{2}-1,1}$ denotes the first zero of the
$\left(\frac{n}{2}-1\right)$-st Bessel function.

Cheng's eigenvalue comparisons (\ref{ECT-1})-(\ref{ECT-2}) inspire
some subsequent works -- see e.g. \cite{DDMZ, FMI,  JM4, JM-1, SAG,
YWMD}, where readers would find that eigenvalue comparisons obtained
therein highly depend on curvature assumptions. This fact motivates
us to consider:

\begin{itemize}
\item (\textbf{Question 1}). Is it possible to get some other extensions to Cheng's
eigenvalue comparisons (\ref{ECT-1})-(\ref{ECT-2}) under different
and feasible curvature assumptions?
\end{itemize}

This paper is organized as follows. Some preliminaries (including
the precise definitions of two elliptic differential operators,
basic facts of the Dirichlet eigenvalue problems of those two
operators, a modified Ricci curvature, the concept of spherically
symmetric manifolds, etc) will be shown in Subsection
\ref{subsect2}. The main conclusions of this paper will be stated
clearly in Subsection \ref{subsect3} (i.e. An affirmative answer to
\textbf{Question 1}). A generalization of Bishop's classical volume
comparison will be proven in Section \ref{sect3}. The proofs of our
Cheng-type eigenvalue comparisons for the Witten-Laplacian and the
weighted $p$-Laplacian (i.e. Theorems \ref{maintheorem1} and
\ref{maintheorem2}) will be given in Section \ref{sect4}, Section
\ref{sect5} respectively.

\subsection{Necessary preliminaries} \label{subsect2}

Throughout this paper, let $M^{n}$ be an $n$-dimensional ($n\geq2$)
complete Riemannian manifold with the metric
$\langle\cdot,\cdot\rangle$, and let $w$ be a given smooth strictly
\emph{positive} function on $M^{n}$. As explained by A. G. Setti
\cite[Section 1]{SAG} that, no matter from the aspect of the
weighted setting but also from the aspect of  right-invariant (sub)
Laplacians on a Lie group with left Haar measure, weighted Laplacian
arise naturally and it should be meaningful to study them. On
$(M^{n}, \langle\cdot,\cdot\rangle)$, one can define as in
\cite{SAG} the following elliptic differential operator (which is a
weighted version of the Laplacian)
\begin{eqnarray*}
\mathcal{L}:= -\Delta - \langle \nabla(\log w), \nabla \cdot
\rangle,
\end{eqnarray*}
and a modified Ricci curvature
\begin{eqnarray*}
R_{w}:=\mathrm{Ric}-w^{-1}\mathrm{Hess}w,
\end{eqnarray*}
where $\Delta$, $\nabla$ are the Laplace and the gradient operators
on $M^{n}$ respectively, $\mathrm{Ric}$ stands for the Ricci
curvature tensor on $M^{n}$, and $\mathrm{Hess}$ denotes the Hessian
operator. For a bounded domain $\Omega$ on $M^{n}$, we can consider
the following Dirichlet eigenvalue problem
\begin{eqnarray} \label{eigenp-1}
\left\{
\begin{array}{ll}
\mathcal{L}u=\lambda u  \qquad & \mathrm{in}~\Omega\subset M^n,\\
u=0\qquad & \mathrm{on}~\partial\Omega.
\end{array}
\right.
\end{eqnarray}
A nonlinear version of the operator $\mathcal{L}$ can be
well-defined as follows
\begin{eqnarray*}
\mathcal{L}_{p}: = -\frac{1}{w} \mathrm{div}\left(w |\nabla
\cdot|^{p-2} \nabla\cdot \right),
\end{eqnarray*}
with $1<p<\infty$, where $\mathrm{div}$ stands for the divergence
operator on $M^n$. For a function $u$ defined on $M^{n}$, in a local
coordinates chart  $\{x^1, x^2, \ldots, x^n\}$ of $M^{n}$,   one
easily has
\begin{eqnarray*}
\mathcal{L}_{p}u=-\frac{1}{w\sqrt{\det
[g_{ij}]}}\sum_{i,j=1}^{n}\frac{\partial}{\partial x^{i}}
\left(w\sqrt{\det [g_{ij}]}|\nabla u|^{p-2}g^{ij}\frac{\partial
u}{\partial x^{j}}\right),
\end{eqnarray*}
where $\det [g_{ij}]$ is the determinant of the metric matrix
$[g_{ij}]$, and $(g^{ij})=[g_{ij}]^{-1}$ denotes the inverse of this
 metric matrix. For a bounded domain $\Omega$ on $M^{n}$, the following Dirichlet eigenvalue problem
\begin{eqnarray} \label{eigenp-wpl}
\left\{
\begin{array}{ll}
\mathcal{L}_{p}u=\lambda|u|^{p-2}u  \qquad & \mathrm{in}~\Omega\subset M^n,\\
u=0\qquad & \mathrm{on}~\partial\Omega
\end{array}
\right.
\end{eqnarray}
can be considered as well.

\begin{remark}  \label{remark1-1}
\rm{ As pointed out in \cite[Remark 1.22]{JM-1}, for the smooth
strictly positive weight function $w$ defined on $(M^{n},
\langle\cdot,\cdot\rangle)$, there must exist a smooth real-valued
function $\phi$ (which is called the \emph{potential function}) such
that $e^{-\phi}=w$. Then the Witten-Laplacian (also called weighted
Laplacian, drifting Laplacian, or $\phi$-Laplacian) and the
$N$-dimensional Bakry-\'{E}mery Ricci curvature tensor can be
well-defined as follows
\begin{eqnarray*}
\Delta_{\phi}:=\Delta-\langle\nabla\cdot,\nabla\phi\rangle=e^{\phi}\mathrm{div}(e^{-\phi}\nabla\cdot)
\end{eqnarray*}
and
\begin{eqnarray*}
\mathrm{Ric}^{N}_{\phi}:=\mathrm{Ric}+\mathrm{Hess}\phi-\frac{d\phi\otimes
d\phi}{N-n}.
\end{eqnarray*}
For the $N$-dimensional Bakry-\'{E}mery Ricci curvature
$\mathrm{Ric}^{N}_{\phi}$, one knows $N>n$ or $N=n$ if $\phi$ is a
constant function. Specially, when $N=\infty$, one can define the
so-called $\infty$-dimensional Bakry-\'{E}mery Ricci curvature
(simply, Bakry-\'{E}mery Ricci curvature or weighted Ricci
curvature) as follows
 \begin{eqnarray*}
 \mathrm{Ric}_{\phi}=\mathrm{Ric}+\mathrm{Hess}\phi.
 \end{eqnarray*}
These notions can be found in \cite{BE} and their relationship with
diffusion processes has also been investigated therein. It is not
hard to see
 \begin{eqnarray*}
\mathcal{L}=-\Delta_{\phi}.
 \end{eqnarray*}
Besides, in \cite[Remark 1.22]{JM-1}, it has been shown that
\begin{eqnarray*}
 \mathrm{Ric}_{w}=\mathrm{Ric}-e^{\phi}\mathrm{Hess}(e^{-\phi})=\mathrm{Ric}+\mathrm{Hess}\phi-d\phi\otimes
d\phi=
 \mathrm{Ric}_{\phi}-d\phi\otimes
d\phi,
 \end{eqnarray*}
which implies that the only difference between $\mathrm{Ric}_{w}$
and the $N$-dimensional Bakry-\'{E}mery Ricci curvature tensor
$\mathrm{Ric}_{\phi}^{N}$ is the coefficient $\frac{1}{N-n}$ in
front of the term $-d\phi\otimes d\phi$, and it is not an essential
difference. By the way, it is not hard to check that the nonlinear
operator $\mathcal{L}_{p}$ satisfies
 \begin{eqnarray*}
\mathcal{L}_{p}=-\Delta_{p,\phi}
 \end{eqnarray*}
provided that $w$ is rewritten as $w=e^{-\phi}$, where
$\Delta_{p,\phi}$ is the weighted $p$-Laplacian well-defined as in
\cite[Section 2]{JM-1}. Clearly, if the potential function
$\phi=const.$ is a constant function (i.e. the weight function $w$
is constant), then the Witten-Laplacian $\Delta_{\phi}$ (i.e.
$-\mathcal{L}$), the weighted $p$-Laplacian $\Delta_{p,\phi}$ (i.e.
$-\mathcal{L}_{p}$) degenerate into the Laplacian $\Delta$ and the
$p$-Laplacian
$\Delta_{p}=\mathrm{div}(|\nabla\cdot|^{p-2}\nabla\cdot)$,
respectively. }
\end{remark}

Since $\mathcal{L}=-\Delta_{\phi}$ explained in Remark
\ref{remark1-1}, one has from \cite[Section 1]{JM-1} (see also
\cite[pp. 1247-1248]{CM}) that:

\begin{itemize}
\item The eigenvalue problem (\ref{eigenp-1}) only has discrete
spectrum, and all the elements (i.e. eigenvalues) in this discrete
spectrum can be listed non-decreasingly as follows
 \begin{eqnarray} \label{sequen-1}
 0<\lambda_{1,w}(\Omega)<\lambda_{2,w}(\Omega)\leq\lambda_{3,w}(\Omega)\leq\cdots\uparrow+\infty.
 \end{eqnarray}
Each eigenvalue\footnote{~For convenience and without any confusion,
except specification we wish to write $\lambda_{i,w}(\Omega)$ as
$\lambda_{i,w}$ directly. This abbreviation convention would also be
used for eigenvalues of other types discussed in the sequel.}
$\lambda_{i,w}$, $i=1,2,\ldots$, in the sequence (\ref{sequen-1})
was repeated according to its multiplicity (which is finite and
equals the dimension of the eigenspace of $\lambda_{i,w}$).

 \item By the variational principle, it is not hard to see that the
 first Dirichlet eigenvalue $\lambda_{1,w}(\Omega)$ can be
 characterized as follows
  \begin{eqnarray} \label{chr-1}
 \lambda_{1,w}(\Omega)=\inf\left\{\frac{\int_{\Omega}|\nabla u|^{2}wdv}{\int_{\Omega}u^{2}wdv}=
 \frac{\int_{\Omega}|\nabla u|^{2}d\mu}{\int_{\Omega}u^{2}d\mu}\Bigg{|}u\in
 W^{1,2}_{0,w}(\Omega),u\neq0\right\},
  \end{eqnarray}
  where $dv$ is the volume element of the Riemannian manifold $M^n$,
  and $d\mu:=wdv$. Using notations similar as in \cite{CM}, here $W^{1,2}_{0,w}(\Omega)$
denotes a Sobolev space, which is the completion of the set of
smooth functions (with compact support) $C^{\infty}_{0}(\Omega)$
under the following Sobolev norm
\begin{eqnarray*}
\|u\|^{w}_{1,2}:=\left(\int_{\Omega}u^{2}wdv+\int_{\Omega}|\nabla
u|^{2}wdv\right)^{1/2}.
\end{eqnarray*}
Moreover, the $k$-th Dirichlet eigenvalue $\lambda_{k,w}(\Omega)$
can be characterized as follows
 \begin{eqnarray*}
 \lambda_{k,w}(\Omega)=\inf\left\{\frac{\int_{\Omega}|\nabla u|^{2}d\mu}{\int_{\Omega}u^{2}d\mu}
 \Bigg{|}u\in W^{1,2}_{0,w}(\Omega),u\neq0,\int_{\Omega}uu_{i}d\mu=0\right\},
 \end{eqnarray*}
where $u_{i}$, $i=1,2,\cdots,k-1$, denotes an eigenfunction of
$\lambda_{i,w}(\Omega)$.
\end{itemize}
In our previous work \cite[Introduction]{CM1}, we have given a
detailed explanation on why is interesting and important to study
spectral geometric problems related to the Witten-Laplacian. We
already have some interesting works about spectral isoperimetric
inequalities, spectral estimates and geometric functional
inequalities related to the Witten-Laplacian -- see, e.g., \cite{CM,
CM1, DMWW, JM1, JM2, YWMD}.

Since $\mathcal{L}_{p}=-\Delta_{p,\phi}$ explained in Remark
\ref{remark1-1}, one directly has from \cite[Section 2]{JM-1} that:
\begin{itemize}
\item (\ref{eigenp-wpl}) has a positive weak solution, which is unique modulo
the scaling, in the space $W^{1,p}_{0,w}(\Omega)$, the completion of
the set $C^{\infty}_{0}(\Omega)$ of smooth functions compactly
supported on $\Omega$ under the Sobolev norm
\begin{eqnarray*}
\|u\|^{w}_{1,p}:=\left(\int_{\Omega}|u|^{p}wdv+\int_{\Omega}|\nabla
u|^{p}wdv\right)^{1/p}.
\end{eqnarray*}

\item By applying the
Ljusternik-Schnirelman principle, there exists a nondecreasing
sequence of nonnegative eigenvalues $\{\lambda_{i,p}^{w}(\Omega)\}$
tending to $\infty$ as $i\rightarrow\infty$.

\item The first Dirichlet eigenvalue $\lambda_{1,p}^{w}(\Omega)$  is simple, isolated, and eigenfunctions
associated with $\lambda_{1,p}^{w}(\Omega)$ do not change sign.
Besides, $\lambda_{1,p}^{w}(\Omega)$ can be characterized by
\begin{eqnarray} \label{chr-2}
\lambda_{1,p}^{w}(\Omega)=\inf\left\{\frac{\int_{\Omega}|\nabla
u|^{p}wdv}{\int_{\Omega}| u|^{p}wdv}=\frac{\int_{\Omega}|\nabla
u|^{p}d\mu}{\int_{\Omega}| u|^{p}d\mu}\Big{|}u\neq0,~u\in
W^{1,p}_{0,w}(\Omega)\right\}.
\end{eqnarray}

\item The set of eigenvalues is closed. The eigenvalue
$\lambda_{2,p}^{w}(\Omega)$ is the second eigenvalue, i.e.
$\lambda_{2,p}^{w}(\Omega)=\inf\{\lambda|\lambda~{\rm{is~an~eigenvalue~of~(\ref{eigenp-wpl})~and}}~\lambda>\lambda_{1,p}^{w}(\Omega)\}$.

\end{itemize}
In \cite[Section 3]{DMWX}, we have given several lower bounds for
the first eigenvalue of weighted $p$-Laplacian on submanifolds with
locally bounded weighted mean curvature.

If the curvature assumption that the Ricci curvature is bounded from
below by some constant was replaced by the one that the modified
Ricci curvature $R_{w}$ is bounded from below by some constant, A.
G. Setti \cite[Theorem 4.2]{SAG} generalized the eigenvalue
comparison (\ref{ECT-1}) in some way, but lost the related rigidity
characterization. A little bit more than 10 years ago, P. Freitas,
J. Mao, I. Salavessa \cite[Definition 2.1]{FMI} accurately gave the
notion of \emph{spherical symmetrization} as follows:
\begin{defn}
A domain $\Omega=\exp_q([0,l)\times{S}_q^{n-1}) \subset
M^{n}\backslash \mathrm{Cut}(q)$, with $l<inj(q)$,  is said to be
spherically symmetric with respect to a point $q\in \Omega$, if
 and only if
the matrix $\mathbb{A}(t,\xi)$ satisfies $\mathbb{A}(t,\xi)=f(t)I$,
for a function $f\in{C^{2}([0,l))}$,  with   $f(0)=0$, $f'(0)=1$,
and  $f|(0,l)>0$.
\end{defn}
\noindent Here\footnote{~In this paper, since we also consider the
weighted $p$-Laplacian case, in order to avoid any potential
confusion, we denote  by $q$ the chosen point on $M^n$.} $\exp_q$
denotes the exponential mapping at the point $q\in
 M^{n}$, $\mathrm{Cut}(q)$ and $inj(q)$ stand for the cut-locus of $q$ and the injectivity radius at
 $q$ respectively, ${S}_q^{n-1}$ is the unit sphere in the tangent
 space $T_{q}M^{n}$ at $q$, $I$ denotes the identity matrix, and $\mathbb{A}(t,\xi)$ is the path of linear
 transformations defined similarly as in \cite[p. 703]{FMI} (see also the definition (\ref{PLT}) in Section \ref{sect3} below).
 Therefore, in this case the Riemannian metric of $M^{n}$ can be
 expressed on $\Omega$ by
 \begin{eqnarray*}
ds^{2}=dt^{2}+f^{2}(t)|d\xi|^2, \qquad \forall\xi\in
S^{n-1}_{q},~0<t<l,
 \end{eqnarray*}
with $|d\xi|^2$ the round metric on the unit Euclidean
$(n-1)$-sphere $\mathbb{S}^{n-1}$. If the given complete
$n$-manifold has\footnote{~This concept of radial (Ricci or
sectional) bounded can be seen in \cite[Definition 2.2 and
Definition 2.3]{FMI}.} a \emph{radial} Ricci (or
 sectional) curvature lower (or upper) bound with respect to a point
 $q\in M^{n}$, and this bound is a continuous function of
 the Riemannian distance parameter $r(\cdot):=d(q,\cdot)$ within the set
 $M^{n}\setminus\left({q}\cup\mathrm{Cut}(q)\right)$, they can
 set up two Bishop-type volume comparisons (see \cite[Corollary 3.5 and Theorem
 4.2]{FMI}). For any $x\in
 M^{n}\setminus\left({q}\cup\mathrm{Cut}(q)\right)$, there exists a
 unit vector $\xi\in S_{q}^{n-1}$ such that
 $x=\gamma_{\xi}(t_0)=\exp_{q}(t_{0}\xi)$, $0<t_{0}<d_{\xi}(q)$,
 where
 \begin{eqnarray*}
~~d_{\xi}(q):=\sup\{t>0|\gamma_{\xi}(s)=\exp_{q}(s\xi)~\mathrm{is~the~unique~minimal~geodesic~joining}~q~\mathrm{and}~\gamma_{\xi}(t)\}.
 \end{eqnarray*}
As explained in \cite[Section 2]{FMI}, one knows  for any $x\in
 M^{n}\setminus\left({q}\cup\mathrm{Cut}(q)\right)$ that
\begin{eqnarray*}
\nu_{x}=\nabla r(x)=\frac{d}{dt}\Big{|}_{x}
\end{eqnarray*}
is the radial unit tangent vector at $x=\gamma_{\xi}(t)$. By
applying two Bishop-type volume comparisons obtained in
\cite[Corollary 3.5 and Theorem
 4.2]{FMI} as part of the main tools, for an $n$-dimensional ($n\geq2$) complete Riemannian manifold $M^{n}$, P. Freitas,
J. Mao, I. Salavessa \cite[Theorems 3.6 and 4.4]{FMI} obtained two
eigenvalue comparisons as follows:
\begin{itemize}
\item If the radial Ricci curvature of $M^{n}$ has a lower bound
$(n-1)\kappa(t)$ with respect to a point $q\in M^{n}$, i.e.
\begin{eqnarray*}
\mathrm{Ric}(\nu_{x},\nu_{x})\geq(n-1)\kappa(t)
\end{eqnarray*}
for any $x=\gamma_{\xi}(t)=\exp_{q}(t\xi)$, $0<t<d_{\xi}(q)$, then
 \begin{eqnarray} \label{ECT-3}
\lambda_{1}(B(q,r_0))\leq\lambda_{1}(\mathscr{B}_{n}(q^{-},r_0)),
 \end{eqnarray}
where $r_{0}<\min\{d_{\xi}(q),l\}$,
$M^{-}:=[0,l)\times_{f}\mathbb{S}^{n-1}$ is the spherically
symmetric $n$-manifold with the base point $q^{-}$ and with the
warping function $f$ determined by the following initial value
problem
\begin{eqnarray} \label{ODE}
\left\{
\begin{array}{lll}
f''(t)+\kappa(t)f(t)=0,  \qquad & 0<t<l,\\
f'(0)=1,~f(0)=0,\\
f(t)>0, \qquad & 0<t<l,
\end{array}
\right.
\end{eqnarray}
and $\mathscr{B}_{n}(q^{-},r_0)$ is the geodesic ball, with center
$q^{-}$ and radius $r_0$, on the model space $M^{-}$. Besides, the
equality in (\ref{ECT-3}) holds if and only if $B(q,r_0)$ is
isometric to $\mathscr{B}_{n}(q^{-},r_0)$.

\item If the radial sectional curvature of $M^{n}$ has an upper
bound\footnote{~In general, the bound function $\kappa(t)$ here is
different from the one used for the radial Ricci curvature, which
leads to the fact that generally the model spaces $M^{-}$ and
$M^{+}$ are different. However, for convenience and without
potential confusion, we wish to use $\kappa(t)$ all the time.}
$\kappa(t)$ with respect to a point $q\in M^{n}$, i.e.
 \begin{eqnarray*}
\mathscr{K}(V,\nu_{x})\leq\kappa(t)
 \end{eqnarray*}
for any $x=\gamma_{\xi}(t)=\exp_{q}(t\xi)$, $0<t<d_{\xi}(q)$,
$V\perp\nu_{x}$, $\|V\|=1$, then
 \begin{eqnarray} \label{ECT-4}
\lambda_{1}(B(q,r_0))\geq\lambda_{1}(\mathscr{B}_{n}(q^{+},r_0)),
 \end{eqnarray}
where $r_{0}<\min\{inj(q),l\}$,
$M^{+}:=[0,l)\times_{f}\mathbb{S}^{n-1}$ is the spherically
symmetric $n$-manifold with the base point $q^{+}$ and with the
warping function $f$ determined by (\ref{ODE}), and
$\mathscr{B}_{n}(q^{+},r_0)$ is the geodesic ball, with center
$q^{+}$ and radius $r_0$, on the model space $M^{+}$. Besides, the
equality in (\ref{ECT-3}) holds if and only if $B(q,r_0)$ is
isometric to $\mathscr{B}_{n}(q^{+},r_0)$.

\end{itemize}
It is not hard to see that if $\kappa(t)\equiv K$ is a constant
function, then eigenvalue comparisons (\ref{ECT-3}) and
(\ref{ECT-4}), together with related rigidity characterizations,
would degenerate into Cheng's conclusions
(\ref{ECT-1})-(\ref{ECT-2}) directly. By the way, in \cite[Section
2]{FMI} it has been shown that the optimal (lower or upper) bound of
the radial (Ricci or sectional) curvature with respect to a point on
a given complete $n$-manifold $M^n$ can always be found -- see
(2.9)-(2.10) therein for details. Specially, when $n=2$, i.e. $M^2$
is a complete surface, the radial Ricci curvature and the radial
sectional curvature coincide with each other, and they become
exactly the Gaussian curvature. In this situation, once a point
$q\in M^2$ is chosen, then the optimal lower and upper bounds of the
Gaussian curvature are actually the minimum value and the maximum
value of geodesic circles with the common center $q$. Furthermore,
if $M^2$ can be parameterized, then the optimal bounds for the
Gaussian curvature can be given in form and be computed numerically.
Consequently, by applying the eigenvalue comparisons (\ref{ECT-3})
and (\ref{ECT-4}), the optimal lower and upper bounds of
$\lambda_{1}(B(q,r_0))$ in this case can be computed numerically.
This is because, once the sharp curvature bound $\kappa(t)$ was
determined numerically, the warping function $f(t)$ can also be
determined numerically by solving the system (\ref{ODE}), and then
the optimal lower (or upper) bound
$\lambda=\lambda_{1}(\mathscr{B}_{2}(q^{+},r_0))$ (or
$\lambda=\lambda_{1}(\mathscr{B}_{2}(q^{-},r_0))$) of
$\lambda_{1}(B(q,r_0))$ can be calculated numerically by the system
\begin{eqnarray*}
\left\{
\begin{array}{lll}
\frac{d^{2}\varphi(t)}{dt^{2}}+\frac{f'(t)}{f(t)}\frac{d\varphi(t)}{dt}+\lambda\cdot\varphi(t)=0 \qquad &\mathrm{in}~\left(0,r_0\right),  \\[1mm]
\varphi'(0)=0,~\varphi\left(r_0\right)=0, \\[0.5mm]
 \varphi|_{(0,r_0)}>0.
\end{array} \right.
\end{eqnarray*}
Several interesting examples, involving torus, elliptic paraboloid
and saddle, have been shown in \cite[Section 6]{FMI} to intuitively
demonstrate the truth mentioned above. By the way, much more details
about the numerical calculations for those examples can be seen in
\cite[Section 2.5, Chapter 2]{JM3}.

A little bit later after the work \cite{FMI}, J. Mao \cite[Theorem
3.2]{JM4} successfully improved the eigenvalue comparison
(\ref{ECT-3}) to the case of nonlinear $p$-Laplacian. However, that
time he could not improve the eigenvalue comparison (\ref{ECT-4}) to
the case of
 $p$-Laplacian, since one of the main tools used in the proof of
 (\ref{ECT-4}) given in \cite{FMI}, i.e. Barta's lemma (see
 \cite{JB}), cannot be generalized to a suitable nonlinear version.
Very recently, he overcame this difficulty, that is, for complete
manifolds with \emph{radial} (Ricci or sectional) curvature bounded
(from below or from above), he not only gave Cheng-type eigenvalue
comparison theorems for the first Dirichlet eigenvalue of the
Witten-Laplacian (see \cite[Theorems 1.17 and 1.18]{JM-1}), which
are actually the weighted version of eigenvalue comparisons
(\ref{ECT-1})-(\ref{ECT-2}), but also gave Cheng-type eigenvalue
comparison theorems for the first Dirichlet eigenvalue of the
weighted $p$-Laplacian (see \cite[Theorems 6.6 and 7.3]{JM-1}).

\subsection{Our main conclusions}  \label{subsect3}

 The purpose of this
paper is trying to get Cheng-type eigenvalue comparisons (for the
first Dirichlet eigenvalues of the Witten-Laplacian and the weighted
$p$-Laplacian) under a weaker curvature assumption than the one made
by A. G. Setti in \cite{SAG}.

In fact, we can prove:

\begin{theorem} \label{maintheorem1}
Let $M^n$ be an $n$-dimensional ($n\geq2$) complete Riemannian
manifold with weight function\footnote{~We wish to make an agreement
that, for the purpose of convenience, in the sequel, if we say that
$w$ is a weight function on $M^n$, it implies that $w$ is a smooth
strictly positive function defined on $M^n$.} $w$. For a point $q\in
M^n$, if its modified Ricci curvature $R_{w}$ satisfies
\begin{eqnarray} \label{CS-1}
R_{w}\left(\frac{d}{dt}\Big{|}_{x},\frac{d}{dt}\Big{|}_{x}\right)\geq
n\kappa(t)=-n\frac{f''(t)}{f(t)},
\end{eqnarray}
with $x=\exp_{q}(t\xi)$ and $t<\min\{d_{\xi}(q),l\}$, then we have
\begin{eqnarray} \label{ECT-5}
\lambda_{1,w}(B(q,r_0))\leq\lambda_{1}(\mathscr{B}_{n+1}(q^{-},r_0)),
\end{eqnarray}
where $r_{0}<\min\{\ell(q),l\}$,
$\ell(q):=\sup_{M^n}r(x)=\max_{\xi}d_{\xi}$,
$M^{-}_{1}:=[0,l)\times_{f(t)}\mathbb{S}^n$ is the
$(n+1)$-dimensional spherically symmetric manifold with the base
point $q^{-}$, and moreover, $l$ and the warping function $f(t)$ are
determined by the initial value problem (\ref{ODE}). Besides,
similarly, $\mathscr{B}_{n+1}(q^{-},r_0)$ is the geodesic ball, with
center $q^{-}$ and radius $r_0$, on the model space $M^{-}_{1}$.
\end{theorem}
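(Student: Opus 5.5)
The plan is to follow Cheng's scheme: build a radial trial function from the first eigenfunction of the $(n+1)$-dimensional model ball, and control the weighted volume density by a Bishop-type comparison adapted to $R_w$. The guiding observation, essentially Setti's, is that $\mathrm{Ric}-w^{-1}\mathrm{Hess}w$ is the Ricci curvature in the radial direction of the warped product $M^n\times_{w}\mathbb{S}^1$. This explains both the dimension $n+1$ and the factor $n$ in (\ref{CS-1}).

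\textbf{Step 1: weighted Bishop comparison.} In geodesic polar coordinates about $q$, write $dv=J(t,\xi)\,dt\,d\xi$ with $J=\det\mathbb{A}(t,\xi)$, for $0<t<d_\xi(q)$. Set $\psi(t,\xi):=\left(w(\exp_q(t\xi))\,J(t,\xi)\right)^{1/n}$. The Riccati equation for $\mathbb{A}$ together with Cauchy--Schwarz gives
\[
(J^{1/(n-1)})''\leq-\frac{\mathrm{Ric}(\nu,\nu)}{n-1}J^{1/(n-1)}.
\]
Combining this with $w''=\mathrm{Hess}w(\nu,\nu)$ along $\gamma_\xi$, and using the elementary inequality $\frac{a^2}{n-1}+b^2\geq\frac{(a+b)^2}{n}$ applied to $a=(\log J)'$, $b=(\log w)'$, I expect to obtain
\[
\psi''\leq-\frac{R_w(\nu,\nu)}{n}\,\psi\leq-\kappa(t)\psi .
\]
Since $\psi\sim w(q)^{1/n}t^{(n-1)/n}$ as $t\to0$, the comparison with $f$ needs care at the origin. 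I would instead compare $(\log(wJ))'$ with $n f'/f$ via a Riccati argument for $h=(\log(wJ))'$, which satisfies $h'+\frac{(h-w'/w)^2}{n-1}+\frac{(w'/w)^2}{1}\cdot(\ldots)\leq-R_w$. The target is
\[
h'+\frac{h^2}{n}\leq -n\kappa ,\qquad h\sim\frac{n-1}{t},
\]
while the model quantity $H=n f'/f$ satisfies $H'+H^2/n=-n\kappa$ with $H\sim n/t$. A standard Riccati comparison then yields $h\leq H$ on $(0,d_\xi)$ (the initial behaviour is favourable), hence $t\mapsto w(t,\xi)J(t,\xi)/f^n(t)$ is nonincreasing. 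This is the generalized Bishop comparison of Section \ref{sect3}, and I expect it to be the main obstacle: the mismatch between the $(n-1)/t$ asymptotics of $h$ and the $n/t$ asymptotics of $H$ must be handled, and the estimate must be extended across the cut locus in the distributional sense.

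\textbf{Step 2: trial function.} Let $\varphi$ be the positive first eigenfunction on $\mathscr{B}_{n+1}(q^-,r_0)$, so that
\[
\varphi''+n\frac{f'}{f}\varphi'+\lambda\varphi=0,\qquad \varphi'(0)=0,\quad \varphi(r_0)=0,
\]
with $\lambda=\lambda_1(\mathscr{B}_{n+1}(q^-,r_0))$ and $\varphi'\leq0$. Take $u=\varphi(r(x))$ on $B(q,r_0)$; it is Lipschitz and vanishes on the boundary, so it is admissible in (\ref{chr-1}). Integrating over each ray up to $\min\{r_0,d_\xi\}$, I will show
\[
\int_0^{r_0'}\varphi'^2\,wJ\,dt\leq\lambda\int_0^{r_0'}\varphi^2\,wJ\,dt ,\qquad r_0'=\min\{r_0,d_\xi\}.
\]
To do so, integrate by parts: $\int\varphi'^2 wJ=[\varphi\varphi' wJ]-\int\varphi(\varphi' wJ)'$. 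The key identity is
\[
(\varphi' wJ)'=\left(\varphi''+\varphi'\,\tfrac{(wJ)'}{wJ}\right)wJ\geq\left(\varphi''+n\tfrac{f'}{f}\varphi'\right)wJ=-\lambda\varphi\, wJ ,
\]
which uses $\varphi'\leq0$ and $(wJ)'/(wJ)\leq nf'/f$ from Step 1. The boundary term is $\leq0$, because at $t=r_0'$ we have $\varphi\geq0$ and $\varphi'\leq0$, and it vanishes at $0$.

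\textbf{Step 3: conclusion.} Integrating the ray inequality over $\xi\in S^{n-1}_q$ gives $\int|\nabla u|^2d\mu\leq\lambda\int u^2d\mu$. Since $\mathrm{Cut}(q)$ has measure zero, (\ref{chr-1}) then yields (\ref{ECT-5}).
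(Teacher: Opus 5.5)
Your proposal is correct and follows essentially the same route as the paper. You combine the Riccati identity for $\mathbb{A}$ with $\mathrm{tr}U^2\geq(\mathrm{tr}U)^2/(n-1)$, with $w''=\mathrm{Hess}w(\nu,\nu)$, and with $\frac{a^2}{n-1}+b^2\geq\frac{(a+b)^2}{n}$ to get $h'+h^2/n\leq-n\kappa$, hence $(w\sqrt{|g|})'/(w\sqrt{|g|})\leq nf'/f$ (Theorem \ref{theorem3}); Cheng's radial trial function with ray-wise integration by parts using $\varphi'\leq0$ then finishes, exactly as in the paper.

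Your version is in fact more careful than the paper's in two places: you use the favourable asymptotics $h-H\sim-1/t$ instead of the paper's formal assumption that the two Riccati quantities agree at $t=0$ (both blow up there), and you truncate each ray at $\min\{r_0,d_\xi\}$ with a nonpositive boundary term instead of integrating to $r_0$ on every ray. Because of that truncation, the ``distributional extension across $\mathrm{Cut}(q)$'' you list as an obstacle is not actually needed.
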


\begin{remark}
\rm{ In Theorem \ref{maintheorem1}, it is not necessary to require
that $B(q,r_0)$ is within the cut locus $\mathrm{Cut}(q)$ of $q$,
since the radius $r_0$ might choose value in the interval $
[inj(q),\ell(q))$, and in this situation, a part of the geodesic
ball $B(q,r_0)$ should be outside $\mathrm{Cut}(q)$. }
\end{remark}

Applying Theorem \ref{maintheorem1}, one directly has:

\begin{corollary} \label{coro-1}
 If $\kappa(t)$ in the curvature assumption (\ref{CS-1}) of Theorem
 \ref{maintheorem1} degenerates into $\kappa(t)\equiv K$ for some
 constant $K$, then
\begin{eqnarray} \label{ECT-6}
\lambda_{1,w}(B(q,r_0))\leq\lambda_{1}(\mathcal{B}_{n+1}(K,r_0)),
\end{eqnarray}
with $\mathcal{B}_{n+1}(K,r_0)$ a geodesic ball of radius $r_0$ in
the space $(n+1)$-form $\mathbb{M}^{n+1}(K)$ with constant sectional
curvature $K$.
\end{corollary}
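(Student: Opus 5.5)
The corollary is the special case $\kappa(t)\equiv K$ of Theorem \ref{maintheorem1}, so the plan is to check that the hypotheses and conclusion of that theorem specialize correctly.

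\emph{Step 1: the warping function.} With $\kappa(t)\equiv K$, the initial value problem (\ref{ODE}) becomes $f''+Kf=0$ with $f(0)=0$ and $f'(0)=1$. Its unique solution is $f(t)=S_K(t)$, the function defined in the Introduction. The positivity requirement $f|_{(0,l)}>0$ determines $l$:
\begin{itemize}
\item $l=\pi/\sqrt{K}$ when $K>0$;
\item $l=\infty$ when $K\leq 0$.
\end{itemize}
Moreover $-f''/f=K$, so the curvature assumption (\ref{CS-1}) reads exactly $R_w(\frac{d}{dt},\frac{d}{dt})\geq nK$ along radial directions from $q$.

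\emph{Step 2: the model space.} The model is $M^{-}_{1}=[0,l)\times_{S_K(t)}\mathbb{S}^{n}$, with metric $dt^2+S_K^2(t)|d\xi|^2$. This is exactly the polar-coordinate expression of the space form $\mathbb{M}^{n+1}(K)$ about a point. For $K>0$ it covers the sphere minus the antipodal point, which does not affect geodesic balls of radius $r_0<l=\pi/\sqrt K$.

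Hence $\mathscr{B}_{n+1}(q^{-},r_0)$ is isometric to $\mathcal{B}_{n+1}(K,r_0)$. By two-point homogeneity of $\mathbb{M}^{n+1}(K)$, the choice of center is irrelevant, so
\[
\lambda_1(\mathscr{B}_{n+1}(q^{-},r_0))=\lambda_1(\mathcal{B}_{n+1}(K,r_0)).
\]
Equivalently, both are the first eigenvalue of (\ref{firste-1}) with $n$ replaced by $n+1$.

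\emph{Step 3: conclusion.} Inserting this equality into (\ref{ECT-5}) gives (\ref{ECT-6}), for $r_0<\min\{\ell(q),l\}$ with $l$ as in Step 1.

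The only point needing care is the identification in Step 2. This means checking that the range restriction on $r_0$ from the theorem is compatible with the size of the space-form ball, which it is, since $r_0<l$.
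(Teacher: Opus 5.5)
Your proposal is correct and follows the paper's route: the paper obtains Corollary \ref{coro-1} by directly specializing Theorem \ref{maintheorem1} to $\kappa\equiv K$ and gives no further argument. Your Steps 1--2 spell out what the paper leaves implicit: $f=S_K$, and $M^{-}_{1}$ is the polar-coordinate form of $\mathbb{M}^{n+1}(K)$, so that $\lambda_{1}(\mathscr{B}_{n+1}(q^{-},r_0))=\lambda_{1}(\mathcal{B}_{n+1}(K,r_0))$ for $r_0<\min\{\ell(q),l\}$.
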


\begin{remark}
\rm{ It is easy to see that Corollary \ref{coro-1} coincides exactly
with \cite[Theorem 4.2]{SAG}. That is to say, our Theorem
\ref{maintheorem1} here covers A. G. Setti's conclusion
\cite[Theorem 4.2]{SAG} as a special case. }
\end{remark}

\begin{theorem} \label{maintheorem2}
Under the assumptions as in Theorem  \ref{maintheorem1}, for the
weighted $p$-Laplacian ($1<p<\infty$), one has
\begin{eqnarray}  \label{ECT-7}
\lambda_{1,p}^{w}(B(q,r_0))\leq\lambda_{1,p}(\mathscr{B}_{n+1}(q^{-},r_0)),
\end{eqnarray}
 where  $\lambda_{1,p}(\cdot)$ denotes the
first Dirichlet eigenvalue of the $p$-Laplacian $\Delta_{p}$ on a
given geometric object.
\end{theorem}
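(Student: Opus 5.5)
We prove Theorem \ref{maintheorem2} the way Theorem \ref{maintheorem1} is proved: Cheng's trial-function argument, with the weighted volume comparison of Section \ref{sect3} (the generalized Bishop theorem under (\ref{CS-1})) as the geometric input. The key observation is the one behind Setti's result. Set $J(t,\xi)$ to be the density of $dv$ in geodesic polar coordinates about $q$, so that $dv=J\,dt\,d\xi$. Then the function $\mathcal{J}:=w^{1/n}J^{1/n}$, built from the weighted density $wJ$, satisfies the Riccati-type inequality
\[
\mathcal{J}''+\kappa(t)\mathcal{J}\le 0 \quad \text{along } \gamma_\xi,\ 0<t<d_\xi(q).
\]
This follows from $R_w(\nu,\nu)\ge n\kappa$ after combining the Bochner/Riccati equation for the mean curvature $m=J'/J$ with $w''/w=\mathrm{Hess}\,w(\nu,\nu)/w$ via Cauchy–Schwarz, $(m+w'/w)^2\le \tfrac{n}{n-1}m^2+n(w'/w)^2$. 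In this sense $wJ$ behaves like the volume density $f^n$ of the $(n+1)$-dimensional model $M_1^-$. I will assume the Section \ref{sect3} result in the form: $t\mapsto w(\gamma_\xi(t))J(t,\xi)/f^n(t)$ is nonincreasing on $(0,d_\xi(q))$, with limit $w(q)$ as $t\to0$, after normalizing the density so that $J\sim t^{n-1}$.

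\textbf{Trial function.} Let $\varphi$ be the positive first Dirichlet eigenfunction of $\Delta_p$ on $\mathscr{B}_{n+1}(q^-,r_0)$. It is radial and satisfies
\[
\big(f^n|\varphi'|^{p-2}\varphi'\big)'+\lambda f^n|\varphi|^{p-2}\varphi=0 \ \text{on } (0,r_0),\qquad \varphi'(0)=0,\quad \varphi(r_0)=0,
\]
with $\lambda=\lambda_{1,p}(\mathscr{B}_{n+1}(q^-,r_0))$. Standard ODE arguments, such as those used in \cite{JM4}, give $\varphi'\le0$ on $[0,r_0]$. Set $u(x)=\varphi(r(x))$. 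This is Lipschitz on $B(q,r_0)$, vanishes on the boundary, and lies in $W^{1,p}_{0,w}(B(q,r_0))$, with $|\nabla u|=|\varphi'(r)|$ a.e.

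Since $\mathrm{Cut}(q)$ has measure zero, both integrals reduce to $\int_{S^{n-1}_q}\int_0^{\min\{r_0,d_\xi\}}(\cdot)\,wJ\,dt\,d\xi$. By (\ref{chr-2}), it suffices to show that for each fixed $\xi$, with $c_\xi:=\min\{r_0,d_\xi(q)\}$,
\[
\int_0^{c_\xi}|\varphi'|^p\,wJ\,dt\le\lambda\int_0^{c_\xi}\varphi^p\,wJ\,dt .
\]

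\textbf{Main step.} Multiply the ODE by $\varphi\,wJ/f^n$ and integrate over $(0,c_\xi)$. Integrating by parts gives
\[
\int_0^{c}|\varphi'|^p wJ\,dt=\lambda\int_0^c\varphi^p wJ\,dt+\Big[f^n|\varphi'|^{p-2}\varphi'\varphi\tfrac{wJ}{f^n}\Big]_0^c-\int_0^c f^n|\varphi'|^{p-2}\varphi'\,\varphi\Big(\tfrac{wJ}{f^n}\Big)'dt .
\]
The boundary term vanishes at $t=0$ because $\varphi'(0)=0$. At $t=c$ it is $\le 0$, since $\varphi'\le0$ and $\varphi\ge0$ (it is $0$ when $c=r_0$). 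The last integral is $\ge 0$, because $\varphi'\le0$, $\varphi\ge0$ and $(wJ/f^n)'\le0$, and it carries a minus sign. This gives the desired inequality. Integrating over $\xi$ and applying (\ref{chr-2}) yields (\ref{ECT-7}).

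The main obstacle is the pointwise monotonicity of $wJ/f^n$ along each geodesic up to $d_\xi(q)$, which requires the Section \ref{sect3} volume comparison in its density form rather than just the integrated form. If only the integrated form is available, I would recover the density form by rerunning the Riccati argument with the Cauchy–Schwarz splitting above. A secondary point is the sign $\varphi'\le 0$ for the $p$-Laplacian model eigenfunction, which follows from integrating the ODE: $f^n|\varphi'|^{p-2}\varphi'(t)=-\lambda\int_0^t f^n\varphi^{p-1}<0$.
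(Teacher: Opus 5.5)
Your proof is correct and is essentially the paper's. Both use the radial first eigenfunction of $\Delta_p$ on the $(n+1)$-dimensional model ball, composed with $r(x)$, as the trial function. Both rely on Theorem~\ref{theorem3}'s pointwise bound $(wJ)'/(wJ)\le nf'/f$, which is exactly the monotonicity of $wJ/f^n$ you assume, so no fallback is needed. Both finish with one integration by parts along each radial geodesic.

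Your truncation at $c_\xi=\min\{r_0,d_\xi(q)\}$, together with the sign of the boundary term there, is more careful than the paper. The paper integrates up to $r_0$ in every direction even though it allows $r_0\ge inj(q)$.

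One harmless slip: as $t\to0$ we have $wJ/f^n\sim w(q)/t\to+\infty$, not $w(q)$. Nothing breaks, because the boundary term $|\varphi'|^{p-2}\varphi'\varphi\,wJ$ still vanishes at $0$ and the remaining integral still converges.
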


Applying Theorem \ref{maintheorem2}, one directly has:

\begin{corollary} \label{coro-2}
If $\kappa(t)$ in the curvature assumption (\ref{CS-1}) of Theorem
 \ref{maintheorem1} degenerates into $\kappa(t)\equiv K$ for some
 constant $K$, then
  \begin{eqnarray} \label{ECT-8}
\lambda_{1,p}^{w}(B(q,r_0))\leq\lambda_{1,p}(\mathcal{B}_{n+1}(K,r_0)).
\end{eqnarray}
\end{corollary}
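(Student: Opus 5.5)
The statement immediately above is Corollary~\ref{coro-2}, and I would obtain it by specializing Theorem~\ref{maintheorem2}. No new analysis is required; the only work is to identify the model space in the constant-curvature case.

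\emph{Step 1: the warping function.} Set $\kappa(t)\equiv K$ in the initial value problem (\ref{ODE}). The system
\begin{eqnarray*}
f''(t)+Kf(t)=0,\qquad f(0)=0,\qquad f'(0)=1
\end{eqnarray*}
has the unique solution $f=S_K$, with $S_K$ as defined in the Introduction. The maximal interval on which $f>0$ is $(0,\pi/\sqrt{K})$ if $K>0$, and $(0,\infty)$ if $K\leq0$. This fixes $l$, and it also shows that hypothesis (\ref{CS-1}) becomes $R_w(\frac{d}{dt},\frac{d}{dt})\geq nK$.

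\emph{Step 2: the model space.} With $f=S_K$, the $(n+1)$-dimensional spherically symmetric manifold is
\begin{eqnarray*}
M^{-}_{1}=[0,l)\times_{S_K}\mathbb{S}^{n},\qquad ds^2=dt^2+S_K^2(t)|d\xi|^2 .
\end{eqnarray*}
This is exactly the polar-coordinate form of the metric of $\mathbb{M}^{n+1}(K)$ around a point. When $K>0$ it is the round sphere minus the antipodal point, which is irrelevant since $r_0<l$.

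\emph{Step 3: the balls and the eigenvalues.} Because $M^{-}_{1}$ is $\mathbb{M}^{n+1}(K)$ written in polar coordinates, the geodesic ball $\mathscr{B}_{n+1}(q^{-},r_0)$ is isometric to $\mathcal{B}_{n+1}(K,r_0)$. By two-point homogeneity of the space form, the choice of center does not matter. The first Dirichlet eigenvalue of $\Delta_p$ is an isometry invariant, since the Rayleigh quotient (\ref{chr-2}) with $w\equiv1$ is. Hence
\begin{eqnarray*}
\lambda_{1,p}(\mathscr{B}_{n+1}(q^{-},r_0))=\lambda_{1,p}(\mathcal{B}_{n+1}(K,r_0)).
\end{eqnarray*}
Substituting this into (\ref{ECT-7}) from Theorem~\ref{maintheorem2} gives (\ref{ECT-8}).

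\emph{Step 4: the range of $r_0$.} The admissible radii remain $r_0<\min\{\ell(q),l\}$, with $l=\pi/\sqrt{K}$ for $K>0$ and $l=\infty$ otherwise. When $K>0$, this restriction guarantees that $\mathcal{B}_{n+1}(K,r_0)$ is a genuine geodesic ball strictly inside the cut locus of its center.

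The only point needing any care is the isometry in Step 3, and it is routine.
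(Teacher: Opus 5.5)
Your proposal is correct and follows the paper's route: the paper obtains Corollary~\ref{coro-2} directly from Theorem~\ref{maintheorem2} by setting $\kappa\equiv K$, so that $f=S_K$ and the model ball $\mathscr{B}_{n+1}(q^{-},r_0)$ is the space-form ball $\mathcal{B}_{n+1}(K,r_0)$. Your explicit identification of $l$ and of the model metric just spells out the steps the paper leaves implicit.
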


\begin{remark}
\rm{ It is not hard to see that if $p=2$, then operators
$\mathcal{L}_{p}$, $\Delta_{p}$ degenerate into $\mathcal{L}$ and
$\Delta$, respectively. Moreover, in this situation, the eigenvalue
comparison (\ref{ECT-8}) would become the eigenvalue comparison
(\ref{ECT-6}) exactly, which implies Corollary \ref{coro-2} covers
Corollary \ref{coro-1} as a special case. That is to say, comparing
with the assertion in Theorem
 \ref{maintheorem1}, here we also make an improvement to A. G. Setti's
conclusion \cite[Theorem 4.2]{SAG} in another direction. }
\end{remark}

\begin{remark}
\rm{ In March 2026, the corresponding author has already announced
the results of Theorems \ref{maintheorem1} and \ref{maintheorem2}
roughly in Remark 1.22 of his work \cite{JM-1}. However, until now,
we find enough time to write down all the details. }
\end{remark}

\section{A generalization of Bishop's volume comparison} \label{sect3}
\renewcommand{\thesection}{\arabic{section}}
\renewcommand{\theequation}{\thesection.\arabic{equation}}
\setcounter{equation}{0}

In this section, we wish to give a volume comparison under an
assumption concerning $R_{w}$, which can be seen as an extension of
the classical Bishop's volume comparison, and play an important role
in the proofs of Theorems \ref{maintheorem1} and \ref{maintheorem2}.

\begin{theorem}  \label{theorem3}
Let $M^n$ be an $n$-dimensional ($n\geq2$) complete Riemannian
manifold with weight function $w$. Under the curvature assumption as
in Theorem \ref{maintheorem1}, one has
 \begin{eqnarray*}
(w\sqrt{|g|})^{-1}\frac{\partial (w\sqrt{|g|})}{\partial t}\leq
n\frac{f'(t)}{f(t)},
 \end{eqnarray*}
with $t<\min\{d_{\xi}(q),l\}$, and $|g|:=\det[g_{ij}]$.
\end{theorem}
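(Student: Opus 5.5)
Set $J(t):=\sqrt{|g|}(t,\xi)$, the volume density in geodesic polar coordinates about $q$ along the ray $\gamma_\xi$, and define
\[
\psi(t):=\big(wJ\big)^{1/n}(t)=\big(w(\gamma_\xi(t))\,J(t,\xi)\big)^{1/n}
\]
for $0<t<\min\{d_\xi(q),l\}$. The claimed inequality is $\psi'/\psi\le f'/f$. We will prove the Riccati-type differential inequality $\psi''+\kappa\psi\le 0$ and then close with a Sturm comparison against $f''+\kappa f=0$. This mirrors the classical Bishop argument, in which one shows $(J^{1/(n-1)})''+\kappa J^{1/(n-1)}\le 0$ under $\mathrm{Ric}\ge (n-1)\kappa$. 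The weight $w$ adds one dimension, which is why the exponent becomes $1/n$ and the constant becomes $n\kappa$.

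\textbf{Step 1 (Riccati for the mean curvature).} Let $m(t)=J'/J=\Delta r$. The Bochner formula applied to $r$ (equivalently, the Riccati equation for the shape operator $\mathbb{A}'\mathbb{A}^{-1}$ along $\gamma_\xi$), combined with Cauchy--Schwarz on the $(n-1)$-dimensional Hessian, gives
\[
m'+\frac{m^2}{n-1}\le -\mathrm{Ric}(\partial_t,\partial_t).
\]

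\textbf{Step 2 (adding the weight).} Set $h=\log w$ along $\gamma_\xi$, so that
\[
h'=\frac{w'}{w},\qquad h''=\frac{\mathrm{Hess}\,w(\partial_t,\partial_t)}{w}-h'^2 .
\]
Then $(wJ)'/(wJ)=m+h'=:\theta$, and
\[
\theta'=m'+h''\le -\mathrm{Ric}(\partial_t,\partial_t)-\frac{m^2}{n-1}+\frac{\mathrm{Hess}\,w(\partial_t,\partial_t)}{w}-h'^2 = -R_w(\partial_t,\partial_t)-\Big(\frac{m^2}{n-1}+h'^2\Big).
\]
The key algebraic point is the elementary inequality
\[
\frac{m^2}{n-1}+\frac{h'^2}{1}\ge \frac{(m+h')^2}{n},
\]
which is Cauchy--Schwarz with weights $n-1$ and $1$. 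Together with hypothesis \eqref{CS-1} it yields
\[
\theta'+\frac{\theta^2}{n}\le -n\kappa(t).
\]
Since $\psi'/\psi=\theta/n$, we have $\psi''/\psi=\theta'/n+\theta^2/n^2$, and therefore $\psi''+\kappa\psi\le 0$. Here $\psi>0$ on the interval considered, because $t<d_\xi(q)$, so there are no conjugate points and $J>0$.

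\textbf{Step 3 (comparison).} As $t\to0^+$ we have $J\sim t^{n-1}$ and $w\to w(q)>0$, so
\[
\frac{\psi'}{\psi}=\frac{1}{t}+O(1)=\frac{f'}{f}+O(1),
\]
because $f(0)=0$ and $f'(0)=1$. Consider the Wronskian-type quantity $W=f\psi'-\psi f'$. It satisfies
\[
W'=f\psi''-\psi f''\le -\kappa f\psi+\kappa f\psi=0,
\]
using $f>0$ on $(0,l)$. It remains to check that $W\to0$ as $t\to0^+$. We have $\psi\sim c\,t^{(n-1)/n}$ and $\psi'\sim c\frac{n-1}{n}t^{-1/n}$, so $f\psi'-\psi f'\sim c\,t^{(n-1)/n}\big(\tfrac{n-1}{n}-1\big)\to 0$. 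Hence $W\le 0$ for all $t$ in the interval, which is exactly $\psi'/\psi\le f'/f$. Multiplying by $n$ gives the statement.

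\textbf{Main obstacle.} The delicate step is the boundary behaviour at $t=0$ in Step 3. Here $\psi$ behaves like $t^{(n-1)/n}$ rather than like $t$, so $\psi'/\psi-f'/f\sim -\frac{1}{nt}$, which is negative near $0$; this is favourable, but it must be verified carefully rather than assumed. I would handle it by working directly with $\theta$ in place of $\psi$. Set $u=n f'/f$, which solves $u'+u^2/n=-n\kappa$ and satisfies $u\sim n/t$, while $\theta\sim (n-1)/t$. Then $\theta<u$ near $0$, and the Riccati comparison ($(\theta-u)'\le -(\theta-u)(\theta+u)/n$) prevents $\theta-u$ from ever becoming positive. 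This gives $\theta\le u$ on the whole interval, which is the claim.
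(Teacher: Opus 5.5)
Your proof is correct and is essentially the paper's argument. Steps 1--2 reproduce the paper's facts (I)--(III) and its Riccati inequality $\varphi'+\varphi^2/n\le -n\kappa$ for $\varphi=(w\sqrt{|g|})^{-1}\partial_t(w\sqrt{|g|})$. Your Wronskian $W=f\psi'-\psi f'$ is, up to a constant factor, the paper's $\beta\mathcal{W}$, since $\beta=e^{\int\alpha}$ with $\alpha=\varphi/n+f'/f$ is a constant multiple of $f\,(w\sqrt{|g|})^{1/n}$.

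The one real difference is at $t\to0^+$. You justify the initial condition by showing $W\to0$ there. The paper instead ``assumes $\varphi(0)=\psi(0)$'' and sets $\beta(0)=1$, which is not literally meaningful: both Riccati quantities blow up like $(n-1)/t$ and $n/t$, and $\int_0^t\alpha$ diverges. Your treatment of the endpoint is the more careful one.
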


\begin{proof}
As in \cite[p. 703]{FMI}, one can define the path of linear
transformations
$\mathbb{A}(t,\xi):\xi^{\bot}\rightarrow{\xi^{\bot}}$ as follows
 \begin{eqnarray} \label{PLT}
\mathbb{A}(t,\xi)\eta=(\tau_{t})^{-1}Y_{\eta}(t),
 \end{eqnarray}
where
  $\xi^{\bot}$ is the orthogonal complement of
$\{\mathbb{R}\xi\}$ in $T_{q}M^{n}$,
$\tau_{t}:T_{q}M^{n}\rightarrow{T_{\exp_{q}(t\xi)}M^{n}}$ is the
parallel translation along $\gamma_{\xi}(t)$,
$Y_{\eta}(t)=d(\exp_q)_{(t\xi)}(t\eta)$ is the Jacobi field along
$\gamma_{\xi}(t)$ satisfying $Y_{\eta}(0)=0$, and
$(\nabla_{t}Y_{\eta})(0)=\eta$. This operator satisfies the Jacobi
equation
 $\mathbb{A}''+\mathcal{R}\mathbb{A}=0$ with initial conditions
 $\mathbb{A}(0,\xi)=0$, $\mathbb{A}'(0,\xi)=I$, where
$\mathcal{R}(t)$ is the self-adjoint operator on $\xi^{\bot}$, $
\mathcal{R}(t)\eta=(\tau_{t})^{-1}R(\gamma'_{\xi}(t),\tau_{t}\eta)
\gamma'_{\xi}(t)$. Then one easily has
\begin{eqnarray} \label{PT31-1}
\det \mathbb{A}(t,\sigma) = \sqrt{|g(t,\xi)|},
\end{eqnarray}
and the volume element can be written as
\begin{eqnarray*}
dv = \sqrt{|g(t,\xi)|}dtd\sigma,
\end{eqnarray*}
where $d\sigma$ denotes the $(n-1)$-dimensional volume element on
$\mathbb{S}^{n-1}\equiv S^{n-1}_{q}\subseteq T_{q}M^{n}$.

Define $U:= \mathbb{A}'(t,\xi)\mathbb{A}^{-1}(t,\xi)$ (unless
otherwise specified, $\mathbb{A}'$ denotes the derivative of
$\mathbb{A}$ with respect to the parameter $t$). By a direct
calculation, we have
\begin{eqnarray*}
\frac{\partial}{\partial t} \ln \det \mathbb{A}=\mathrm{tr}
(\mathbb{A}'\mathbb{A}^{-1})=\mathrm{tr} U.
\end{eqnarray*}
On the other hand, it follows from (\ref{PT31-1}) that
\begin{eqnarray*}
\frac{\partial}{\partial t}\ln \det
\mathbb{A}=\frac{\partial}{\partial
t}\ln\sqrt{|g|}=\frac{1}{\sqrt{|g|}}\frac{\partial
\sqrt{|g|}}{\partial t}.
\end{eqnarray*}
Hence, one has
\begin{eqnarray*}
\mathrm{tr} U=\frac{1}{\sqrt{|g|}}\frac{\partial
\sqrt{|g|}}{\partial t}.
\end{eqnarray*}

Let $\{\vec{e}_{i}\}_{i=1}^{n-1}$ be a parallel orthonormal frame
field along each geodesic $\gamma_{\xi}(t)$, $\xi\in S_{q}M^{n}$.
Then Jacobi field $Y_{\eta}(t)$ can be written as
\begin{eqnarray*}
Y_{\eta}(t) = \sum\limits_{i=1}^{n-1} A_i(t)\vec{e}_{i}(t),
\end{eqnarray*}
where $\mathbb{A}(t)=(A_{1}(t),A_{2}(t),\cdots,A_{n-1}(t))$, and
moreover, in this setting, the curvature matrix
$\mathcal{R}(t)=(R_{ij}(t))$ should satisfy
\begin{eqnarray*}
R_{ij}(t)=\left\langle R\left(\vec{e}_{i},\frac{\partial}{\partial
t}\right)\frac{\partial}{\partial t}, \vec{e}_{j} \right\rangle,
\end{eqnarray*}
since $\tau_{t}:T_{q}M^{n}\rightarrow{T_{\exp_{q}(t\xi)}M^{n}}$ is
the parallel translation along $\gamma_{\xi}(t)$ (i.e. isomorphism).

Since $U = \mathbb{A}' \mathbb{A}^{-1}$, a direct computation yields
\begin{eqnarray*}
U' =\mathbb{A}'' \mathbb{A}^{-1} - \mathbb{A}' \mathbb{A}^{-1}
\mathbb{A}' \mathbb{A}^{-1} = \mathbb{A}'' \mathbb{A}^{-1} - U^2.
\end{eqnarray*}
On the other hand, one has from the equation
$\mathbb{A}''+\mathcal{R}\mathbb{A}=0$ that
\begin{eqnarray*}
\mathbb{A}''\mathbb{A}^{-1} = -\mathcal{R}(t).
\end{eqnarray*}
So, one can get
\begin{eqnarray}  \label{PT31-2}
U' + U^2 + \mathcal{R}(t) = 0.
\end{eqnarray}
Since
 \begin{eqnarray*}
\mathrm{tr}\mathcal{R}(t)=\mathrm{tr}\left\langle
R\left(\vec{e}_{i},\frac{\partial}{\partial
t}\right)\frac{\partial}{\partial t}, \vec{e}_{j}
\right\rangle=\mathrm{Ric}\left(\frac{\partial}{\partial
t},\frac{\partial}{\partial t}\right),
 \end{eqnarray*}
it follows from (\ref{PT31-2}) that
 \begin{eqnarray} \label{PT31-3}
\mathrm{tr}U'+\mathrm{tr}U^{2}+\mathrm{Ric}\left(\frac{\partial}{\partial
t}, \frac{\partial}{\partial t}\right)=0.
 \end{eqnarray}

We now derive the following facts:

\vspace{2mm}

\noindent (I) $\mathrm{tr} U^{2}\geq\frac{(\mathrm{tr}
U)^{2}}{n-1}$,

\vspace{2mm}

\noindent (II) $\mathrm{Ric}(\frac{\partial}{\partial
t},\frac{\partial}{\partial t})\geq
-n\frac{f''}{f}+(w^{-1}\frac{\partial w}{\partial
t})^{2}+(w^{-1}\frac{\partial w}{\partial t})'$,

\vspace{2mm}

\noindent (III) $\frac{A^{2}}{n-1}+B^{2}\geq\frac{(A+B)^{2}}{n}$,
for all the real numbers $A, B$.

\vspace{2mm}

We first show (I). By (\ref{PLT}), it is not hard to see that the
operator $U=\mathbb{A}'\mathbb{A}^{-1}$ is self-adjoint. Let
$\Lambda_1, \Lambda_2, \ldots, \Lambda_{n-1}$ be the eigenvalues of
$U$. Then
\begin{eqnarray*}
\mathrm{tr}(U^2) = \sum_{i=1}^{n-1} \Lambda_i^2, \qquad (\mathrm{tr}
U)^2 = \left( \sum_{i=1}^{n-1} \Lambda_i \right)^2.
\end{eqnarray*}
By the Cauchy-Schwarz inequality, one has
\begin{eqnarray*}
\sum_{i=1}^{n-1} \Lambda_i^2 \geq \frac{\left( \sum_{i=1}^{n-1}
\Lambda_i \right)^2}{n-1},
\end{eqnarray*}
which implies (I) directly.

For (II), since the curvature assumption is
\begin{eqnarray*}
R_{w}\left(\frac{d}{dt}\Big{|}_{x},\frac{d}{dt}\Big{|}_{x}\right)\geq
n\kappa(t)=-n\frac{f''(t)}{f(t)},
\end{eqnarray*}
which implies
 \begin{eqnarray} \label{PT31-4}
\mathrm{Ric}\left(\frac{d}{dt}\Big{|}_{x},\frac{d}{dt}\Big{|}_{x}\right)=\mathrm{Ric}\left(\frac{\partial}{\partial
t}, \frac{\partial}{\partial t}\right) &\geq& -n \frac{f''}{f} +
w^{-1} \mathrm{Hess}(w)\left(\frac{\partial}{\partial t},
\frac{\partial}{\partial t}\right)\nonumber\\
&=& -n \frac{f''}{f} + w^{-1}\frac{\partial^{2}w}{\partial t}.
 \end{eqnarray}
 Set $h=\ln w$, and then
 \begin{eqnarray*}
\frac{\partial h}{\partial t}=w^{-1}\frac{\partial w}{\partial t},
\qquad  \frac{\partial ^{2}h}{\partial
t^{2}}=w^{-1}\frac{\partial^{2}w}{\partial
t^{2}}-\left(w^{-1}\frac{\partial w}{\partial t}\right)^{2},
\end{eqnarray*}
which implies
 \begin{eqnarray} \label{PT31-5}
w^{-1}\mathrm{Hess}(w)=w^{-1}\frac{\partial^{2}w}{\partial
t^{2}}=\left(w^{-1}\frac{\partial w}{\partial
t}\right)^{2}+\left(w^{-1}\frac{\partial w}{\partial t}\right)'.
 \end{eqnarray}
The fact (II) follows by  substituting (\ref{PT31-5}) into
(\ref{PT31-4}) directly.

In order to get (III),  it is sufficient to show
 \begin{eqnarray} \label{PT31-6}
\frac{A^{2}}{n-1}+B^{2}-\frac{(A+B)^{2}}{n}\geq 0.
\end{eqnarray}
Since
\begin{equation*}
\begin{aligned}
\frac{A^{2}}{n-1}+B^{2}-\frac{A^{2}+B^{2}+2AB}{n}
&= \frac{A^{2}}{n(n-1)}+\frac{(n-1)B^{2}}{n}-\frac{2AB}{n}  \\
&=\frac{1}{n(n-1)}(A^{2}+(n-1)^{2}B^{2}-2(n-1)AB)  \\
&=\frac{1}{n(n-1)}(A-(n-1)B)^{2}  \\
&\geq 0,
\end{aligned}
\end{equation*}
which directly implies (\ref{PT31-6}), the fact (III) follows
immediately.

By using the facts (I)-(III), together with the equation
(\ref{PT31-3}), one can obtain
\begin{equation*}
\begin{aligned}
0 &\geq \mathrm{tr} U'+\frac{(\mathrm{tr} U)^{2}}{n-1}+\left(w^{-1}\frac{\partial w}{\partial t}\right)^{2}+\left(w^{-1}\frac{\partial w}{\partial t}\right)'-n\frac{f''}{f} \nonumber\\
&=\left(\mathrm{tr} U+w^{-1}\frac{\partial w}{\partial t}\right)'+\frac{(\mathrm{tr} U)^{2}}{n-1}+\left(w^{-1}\frac{\partial w}{\partial t}\right)^{2}-n\frac{f''}{f} \nonumber \\
&\geq \left(\mathrm{tr} U+w^{-1}\frac{\partial w}{\partial t}\right)'+\frac{(\mathrm{tr} U+w^{-1}\frac{\partial w}{\partial t})^{2}}{n}-n\frac{f''}{f}. \\
\end{aligned}
\end{equation*}
Let $\varphi:=\mathrm{tr} U+w^{-1}\frac{\partial w}{\partial t}$. It
is easy to get
 \begin{eqnarray*}
\varphi=\frac{1}{\sqrt{|g|}}\frac{\partial \sqrt{|g|}}{\partial
t}+w^{-1}\frac{\partial w}{\partial
t}=(w\sqrt{|g|})^{-1}\frac{\partial (w\sqrt{|g|})}{\partial t}
 \end{eqnarray*}
by a direct computation. Therefore, one has
 \begin{eqnarray} \label{PT31-7}
\varphi'+\frac{\varphi^{2}}{n}-n\frac{f''}{f}\leq 0.
 \end{eqnarray}
Set $\psi (t)=n\frac{f'(t)}{f(t)}$. Since $f''(t)+\kappa(t)f(t)=0$,
one has
 \begin{eqnarray*}
\psi'=n\frac{f''f-(f')^{2}}{f^{2}}=n\frac{-\kappa
f^{2}-(f')^{2}}{f^{2}}=-n\kappa-n\left(\frac{f'}{f}\right)^{2},
 \end{eqnarray*}
that is,
 \begin{eqnarray}  \label{PT31-8}
\psi'+\frac{\psi^{2}}{n}+n\kappa(t)=\psi'+\frac{\psi^{2}}{n}-n\frac{f''}{f}=0.
 \end{eqnarray}

Assume that $\varphi (0)=\psi (0)$. Set
$\mathcal{W}(t):=\varphi-\psi(t)$, and then it follows from
(\ref{PT31-7})-(\ref{PT31-8}) that
 \begin{eqnarray*}
\mathcal{W}'(t)=\varphi'-\psi'\leq-\frac{\varphi^{2}-\psi^{2}}{n^2}=-\frac{\mathcal{W}(t)\cdot(\varphi+\psi)}{n^2},
 \end{eqnarray*}
which implies
 \begin{eqnarray} \label{PT31-9}
 \mathcal{W}'+\alpha(t)\mathcal{W}\leq0,
 \end{eqnarray}
 with $\alpha(t):=(\varphi+\psi)/n$. Let
 $\beta(t):=e^{\int_{0}^{t}\alpha(s)ds}$. By (\ref{PT31-9}), it is not
 hard to see
  \begin{eqnarray*}
 (\beta\mathcal{W})'=\beta\left(\mathcal{W}'+\alpha(t)\mathcal{W}\right)\leq0,
  \end{eqnarray*}
 and then together with $\varphi (0)=\psi (0)$, one has
 \begin{eqnarray*}
\beta(t)\mathcal{W}(t)\leq\beta(0)\mathcal{W}(0)=e^{0}\cdot\left(\varphi
(0)-\psi (0)\right)=0.
 \end{eqnarray*}
 Hence, it follows
 \begin{eqnarray*}
\mathcal{W}=\varphi-\psi(t)\leq0,
 \end{eqnarray*}
which implies
\begin{eqnarray*}
(w\sqrt{|g|})^{-1}\frac{\partial (w\sqrt{|g|})}{\partial t}\leq
n\frac{f'(t)}{f(t)}
\end{eqnarray*}
directly. This completes the proof.
\end{proof}

\section{The case of Witten-Laplacian} \label{sect4}
\renewcommand{\thesection}{\arabic{section}}
\renewcommand{\theequation}{\thesection.\arabic{equation}}
\setcounter{equation}{0}

Applying Theorem \ref{theorem3} as one of the main tools, we have:

\begin{proof} [Proof of Theorem \ref{maintheorem1}]
By using \cite[Lemma 3.2]{FMI} directly, one knows that:
\begin{itemize}
\item The eigenfunction corresponding to the first Dirichlet eigenvalue
of the Laplacian on $\mathscr{B}_{n+1}(q^{-},r_0)$ may be chosen to
be non-negative and is a radial function $T(t)$ satisfying $T'(t)<0$
for $0<t<r_0$.
\end{itemize}
Besides, in geodesic polar coordinates $(t,\xi)$ centered at
$q^{-}\in \mathscr{B}_{n+1}(q^{-},r_0)$, the Laplacian $\Delta$ on
$\mathscr{B}_{n+1}(q^{-},r_0)$ should take the form
\begin{eqnarray*}
\Delta=\frac{d^{2}}{dt^{2}}+n\frac{f'(t)}{f(t)}\frac{d}{dt}
+\frac{1}{f^{2}(t)}\Delta_{\mathbb{S}^{n}},
\end{eqnarray*}
where $\Delta_{\mathbb{S}^{n}}$ denotes the Laplacian on
$\mathbb{S}^n$ with respect to its round metric. Moreover,
 \begin{eqnarray*}
\Delta T(t)=T''(t) + n \frac{f'(t)}{f(t)} T'(t).
 \end{eqnarray*}
Therefore, combining the above facts, it is not hard to know that
$T(t)$ should satisfy
\begin{eqnarray} \label{firsteigenfunction-1}
\left\{
\begin{array}{lll}
T''(t) + n \frac{f'(t)}{f(t)} T'(t) + \lambda_{1}\left(\mathscr{B}_{n+1}(q^{-},r_0)\right)T(t) = 0 \qquad &\mathrm{in}~\left(0,r_0\right),  \\[1mm]
T'(0)=0,~T\left(r_0\right)=0, \\[0.5mm]
 T|_{(0,r_0)}>0,
\end{array} \right.
\end{eqnarray}
where $T\left(r_0\right)=0$ corresponds to the Dirichlet boundary
condition, and $T'(0)=0$ in (\ref{firsteigenfunction-1}) should be
required to assure the regularity of the eigenfunction $T(t)$ (see
e.g. \cite[Section 4, Chapter II]{IC} for a detailed explanation).

Define $\mathcal {G}(x):=T(r(x))$, where as in Subsection
\ref{subsect2}, $r(x)=d(q,x)$, i.e. the Riemannian distance between
$q$ and $x$. It is not hard to see that $\mathcal {G}= 0$ on
$\partial B(q, r_0)$, and $\mathcal {G}\in W^{1,2}_{0,w}(B(q,r_0))$.
Therefore, $\mathcal{G}$ can be used as a trial function for the
Dirichlet eigenvalue problem (\ref{eigenp-1}) of the operator
$\mathcal{L}$. By the variational characterization (\ref{chr-1}),
one easily has
 \begin{eqnarray} \label{PT4-1}
\lambda_{1,w}(B(q, r_0))\leq \frac{\int_{B(q, r_0)}|\nabla
\mathcal{G}|^{2}wdv}{\int_{B(q, r_0)}|\mathcal{G}|^{2}wdv}.
 \end{eqnarray}
Since $\mathcal{G}$ is radial, $|\nabla\mathcal{G}|^2=|T'|^2$, and
by a direct calculation under the geodesic polar coordinates around
$q$, we have
 \begin{eqnarray*}
\int_{B(q, r_0)}|\nabla
\mathcal{G}|^{2}wdv=\int_{\mathbb{S}^{n-1}}\int^{r_0}_{0}|T'|^{2}w\sqrt{|g|}dtd\sigma,
 \end{eqnarray*}
which, together with Theorem \ref{theorem3} and
(\ref{firsteigenfunction-1}), implies
\begin{equation*}
\begin{aligned}
\int^{r_0}_{0}|T'|^{2}w\sqrt{|g|}dt
&=TT'w\sqrt{|g|}\Big|^{r_0}_{0}-\int^{r_0}_{0}T(T'w\sqrt{|g|})'dt  \\
&=-\int^{r_0}_{0}T\left[T''w\sqrt{|g|}+T'(w\sqrt{|g|})'\right]dt  \\
&=-\int^{r_0}_{0}T\left[T''+T'\frac{(w\sqrt{|g|})'}{w\sqrt{|g|}}\right]w\sqrt{|g|}dt  \\
&\leq -\int^{r_0}_{0}T\left(T''+n\frac{f'}{f}T'\right)w\sqrt{|g|}dt  \\
&=\lambda_{1}\left(\mathscr{B}_{n+1}(q^{-},r_0)\right)\int^{r_0}_{0}T^{2}w\sqrt{|g|}dt.
\end{aligned}
\end{equation*}
So, we have
\begin{eqnarray*}
\int_{\mathbb{S}^{n-1}}\int^{r_0}_{0}|T'|^{2}w\sqrt{|g|}dtd\sigma\leq\lambda_{1}\left(\mathscr{B}_{n+1}(q^{-},r_0)\right)\int_{\mathbb{S}^{n-1}}\int^{r_0}_{0}T^{2}w\sqrt{|g|}dtd\sigma,
\end{eqnarray*}
which implies
\begin{eqnarray} \label{PT4-2}
\int_{B(q, r_0)}|\nabla
\mathcal{G}|^{2}wdv\leq\lambda_{1}\left(\mathscr{B}_{n+1}(q^{-},r_0)\right)\int_{B(q,
r_0)}|\mathcal{G}|^{2}wdv.
\end{eqnarray}
Combining (\ref{PT4-1}) and  (\ref{PT4-2}), one easily has
\begin{eqnarray*}
\lambda_{1,w}(B(q, r_0))\leq \frac{\int_{B(q, r_0)}|\nabla
\mathcal{G}|^{2}wdv}{\int_{B(q,
r_0)}|\mathcal{G}|^{2}wdv}\leq\lambda_{1}\left(\mathscr{B}_{n+1}(q^{-},r_0)\right).
\end{eqnarray*}
This completes the proof.
\end{proof}

\section{The case of weighted $p$-Laplacian}  \label{sect5}
\renewcommand{\thesection}{\arabic{section}}
\renewcommand{\theequation}{\thesection.\arabic{equation}}
\setcounter{equation}{0}

At the end, by applying Theorem \ref{theorem3} as one of the main
tools, we have:

\begin{proof} [Proof of Theorem \ref{maintheorem2}]
By using \cite[Proposition 3.1]{JM4} and the first part in the proof
of \cite[Theorem 3.2]{JM4}, one knows that:
\begin{itemize}
\item The eigenfunction corresponding to the first Dirichlet eigenvalue
of the $p$-Laplacian on $\mathscr{B}_{n+1}(q^{-},r_0)$ may be chosen
to be non-negative and is a radial function $\Psi(t)$ satisfying
$\Psi'(t)<0$ for $0<t<r_0$.
\end{itemize}
Besides, in geodesic polar coordinates $(t,\xi)$ centered at
$q^{-}\in \mathscr{B}_{n+1}(q^{-},r_0)$, the $p$-Laplacian
$\Delta_{p}$ on $\mathscr{B}_{n+1}(q^{-},r_0)$ should take the form
 \begin{eqnarray*}
\Delta_{p}=|\nabla
(\cdot)|^{p-2}\frac{d^{2}}{dt^{2}}+\frac{d}{dt}(|\nabla
(\cdot)|^{p-2})\frac{d}{dt}+n\frac{f'(t)}{f(t)}|\nabla
(\cdot)|^{p-2}\frac{d}{dt}+\frac{1}{f^{2}(t)}\Delta
_{p,\mathbb{S}^{n}},
 \end{eqnarray*}
where $\Delta_{p,\mathbb{S}^{n}}$ denotes the $p$-Laplacian on
$\mathbb{S}^n$ with respect to its round metric. Moreover,
 \begin{eqnarray*}
\Delta_{p}\Psi(t)=|\Psi'(t)|^{p-2}\Psi''(t)+\frac{d}{dt}(|\Psi'(t)|^{p-2})\Psi'(t)+n\frac{f'(t)}{f(t)}|\Psi'(t)|^{p-2}\Psi'(t).
\end{eqnarray*}
Therefore, it is not hard to know from the above facts that
$\Psi(t)$ should satisfy
\begin{eqnarray} \label{firsteigenfunction-2}
\left\{
\begin{array}{llll}
(p-1)|\Psi'(t)|^{p-2}\Psi''(t)+n\frac{f'(t)}{f(t)}|\Psi'(t)|^{p-2}\Psi'(t) + \\
\qquad\qquad\qquad\qquad\qquad \lambda_{1}\left(\mathscr{B}_{n+1}(q^{-},r_0)\right)|\Psi(t)|^{p-2}\Psi(t) = 0 \qquad &\mathrm{in}~\left(0,r_0\right),  \\[1mm]
\Psi'(0)=0,~\Psi\left(r_0\right)=0, \\[0.5mm]
 \Psi|_{(0,r_0)}>0.
\end{array} \right.
\end{eqnarray}

Define $\mathcal {F}(x):=\Psi(r(x))$, where  $r(x)=d(q,x)$. It is
not hard to see that $\mathcal {F}= 0$ on $\partial B(q, r_0)$, and
$\mathcal {F}\in W^{1,p}_{0,w}(B(q,r_0))$. Therefore, $\mathcal{F}$
can be used as a trial function for the Dirichlet eigenvalue problem
(\ref{eigenp-wpl}) of the operator $\mathcal{L}_{p}$. By the
variational characterization (\ref{chr-2}), one easily has
 \begin{eqnarray} \label{PT5-1}
\lambda_{1,p}^{w}(B(q, r_0))\leq \frac{\int_{B(q, r_0)}|\nabla
\mathcal{F}|^{p}wdv}{\int_{B(q, r_0)}|\mathcal{F}|^{p}wdv}.
 \end{eqnarray}
Since $\mathcal{F}$ is radial, $|\nabla\mathcal{F}|^p=|\Psi'|^p$,
and by a direct calculation under the geodesic polar coordinates
around $q$, we have
 \begin{eqnarray*}
\int_{B(q, r_0)}|\nabla
\mathcal{F}|^{p}wdv=\int_{\mathbb{S}^{n-1}}\int^{r_0}_{0}|\Psi'|^{p}w\sqrt{|g|}dtd\sigma,
 \end{eqnarray*}
which, together with Theorem \ref{theorem3} and
(\ref{firsteigenfunction-2}), implies
\begin{equation*}
\begin{aligned}
\int^{r_0}_{0}|\Psi'|^{p}w\sqrt{|g|}dt
&=-\Psi|\Psi'|^{p-1}w\sqrt{|g|}\Big|^{R}_{0}-\int^{r_0}_{0}(-\Psi)(|\Psi'|^{p-1}w\sqrt{|g|})'dt  \\
&=\int^{r_0}_{0}\Psi\left[(p-1)|\Psi'|^{p-3}\Psi'\Psi''w\sqrt{|g|}+|\Psi'|^{p-1}(w\sqrt{|g|})'\right]dt  \\
&=\int^{r_0}_{0}\Psi\left[(p-1)|\Psi'|^{p-3}\Psi'\Psi''+|\Psi'|^{p-1}\frac{(w\sqrt{|g|})'}{w\sqrt{|g|}}\right]w\sqrt{|g|}dt   \\
&\leq \int^{r_0}_{0}\Psi\left[(p-1)|\Psi'|^{p-3}\Psi'\Psi''+n\frac{f'}{f}|\Psi'|^{p-1}\right]w\sqrt{|g|}dt  \\
&=\int^{r_0}_{0}\Psi\left[-(p-1)|\Psi'|^{p-2}\Psi''-n\frac{f'}{f}|\Psi'|^{p-2}\Psi'\right]w\sqrt{|g|}dt  \\
&=\lambda_{1,p}\left(\mathscr{B}_{n+1}(q^{-},r_0)\right)\int^{r_0}_{0}|\Psi|^{p}w\sqrt{|g|}dt.
\end{aligned}
\end{equation*}
So, we have
\begin{eqnarray*}
\int_{\mathbb{S}^{n-1}}\int^{r_0}_{0}|\Psi'|^{p}w\sqrt{|g|}dtd\sigma\leq\lambda_{1,p}\left(\mathscr{B}_{n+1}(q^{-},r_0)\right)\int_{\mathbb{S}^{n-1}}\int^{r_0}_{0}|\Psi|^{p}w\sqrt{|g|}dtd\sigma,
\end{eqnarray*}
which implies
\begin{eqnarray} \label{PT5-2}
\int_{B(q, r_0)}|\nabla
\mathcal{F}|^{p}wdv\leq\lambda_{1,p}\left(\mathscr{B}_{n+1}(q^{-},r_0)\right)\int_{B(q,
r_0)}|\mathcal{F}|^{p}wdv.
\end{eqnarray}
Combining (\ref{PT5-1}) and  (\ref{PT5-2}), one easily has
\begin{eqnarray*}
\lambda_{1,p}^{w}(B(q, r_0))\leq \frac{\int_{B(q, r_0)}|\nabla
\mathcal{F}|^{p}wdv}{\int_{B(q,
r_0)}|\mathcal{F}|^{p}wdv}\leq\lambda_{1,p}\left(\mathscr{B}_{n+1}(q^{-},r_0)\right).
\end{eqnarray*}
This completes the proof.
\end{proof}

\begin{remark}
\rm{ We wish to emphasize that for the curvature assumption in
Theorem \ref{maintheorem1} (or Theorem \ref{maintheorem2}), we do
not need to require that the weight function $w$ is radial with
respect to the chosen point $q$. This is the essential difference
between this paper and the recent work \cite{JM-1}, where Cheng-type
eigenvalue comparison theorems, including rigidity
characterizations, have been set up for the Witten-Laplacian and the
weighted $p$-Laplacian on complete manifolds with radial (Ricci or
sectional) curvature bounded, provided the weight function therein
is \emph{radial}. }
\end{remark}

\section*{Acknowledgments}
\renewcommand{\thesection}{\arabic{section}}
\renewcommand{\theequation}{\thesection.\arabic{equation}}
\setcounter{equation}{0} \setcounter{maintheorem}{0}

This research was supported in part by the NSF of China (Grant Nos.
11801496 and 11926352), the Fok Ying-Tung Education Foundation
(China), the Key Project of Jiangxi Provincial Natural Science
Foundation (Grant No. 20252BAC250003), Hubei Key Laboratory of
Applied Mathematics (Hubei University), and Key Laboratory of
Intelligent Sensing System and Security (Hubei University), Ministry
of Education.


\begin{thebibliography}{9999}


\bibitem{BE} D. Bakry, M. \'{E}mery, \emph{Diffusion hypercontractives}, S\'{e}m.
Prob. XIX. Lect. Notes Math. {\bf 1123} (1985) 177--206.

\bibitem{JB} J. Barta, \emph{Sur la vibration fundamentale d'une
membrane}, C. R. Acad. Sci. {\bf 204} (1937) 472--473.


\bibitem{IC} I. Chavel, \emph{Eigenvalues in Riemannian Geometry}, Academic Press, New
York (1984).


\bibitem{CM} R. F. Chen, J. Mao, \emph{Several isoperimetric inequalities of Dirichlet and Neumann eigenvalues of the Witten
Laplacian}, J. Spectral Theory {\bf 15} (2025) 1241--1277.


\bibitem{CM1} R. F. Chen, J. Mao, \emph{On the Ashbaugh-Benguria type conjecture about lower-order Neumann
eigenvalues of the Witten-Laplacian}, available online at
arXiv:2403.08070v3

\bibitem{CSY1} S. Y. Cheng, \emph{Eigenvalue comparison theorems and its geometric
applications}, Math. Zeit. {\bf143} (1975) 289--297.

\bibitem{CSY2} S. Y. Cheng, \emph{Eigenfunctions and eigenvalues of Laplacian},
Amer. Math. Soc. Proc. Symp. Pure Math. {\bf27} (Part II) (1975)
185--193.


\bibitem{DDMZ} Y. L. Deng, F. Du, J. Mao, Y. Zhao, \emph{Sharp eigenvalue estimates and related rigidity
theorems}, Hokkaido Math. J. {\bf 55} (2026) 57--85.


\bibitem{DMWW} F. Du, J. Mao, Q. L. Wang, C. X. Wu, \emph{Eigenvalue inequalities for the buckling problem of the
drifting Laplacian on Ricci solitons}, J. Differ. Equat. {\bf 260}
(2016) 5533--5564.

\bibitem{DMWX} F. Du, J. Mao, Q. L. Wang, C. Y. Xia, \emph{Estimates for eigenvalues of weighted Laplacian and
weighted $p$-Laplacian}, Hiroshima Math. J. {\bf 51} (2021)
335--353.

\bibitem{FMI} P. Freitas, J. Mao, I. Salavessa, \emph{Spherical symmetrization and the first eigenvalue of
geodesic disks on manifolds}, Calc. Var. Partial Differential
Equations {\bf 51} (2014) 701--724.


\bibitem{JM3} J. Mao, \emph{Eigenvalue estimation and some results on finite topological
type}, Ph.D. thesis, IST-UTL (2013).



\bibitem{JM4} J. Mao, \emph{Eigenvalue inequalities for the $p$-Laplacian on a Riemannian
manifold and estimates for the heat kernel}, J. Math. Pures Appl.
{\bf101} (2014) 372--393.


\bibitem{JM1} J. Mao, \emph{The Gagliardo-Nirenberg inequalities and manifolds with non-negative weighted Ricci
curvature}, Kyushu J. Math.  {\bf 70} (2016) 29--46.


\bibitem{JM2} J. Mao, \emph{Functional inequalities and manifolds with nonnegative weighted Ricci
curvature}, Czech. Math. J. {\bf 70} (2020) 213--233.



\bibitem{JM-1} J. Mao, \emph{Weighted heat kernel comparison theorems and its
applications in spectral geometry}, submitted and available online
at arXiv:2603.00942v3

\bibitem{SAG} A. G. Setti, \emph{Eigenvalue estimates for the weighted Laplacian on a
Riemannian manifold}, Rend. Sem. Mat. Univ. Padova {\bf 100} (1998)
27--55.


\bibitem{YWMD} Y. Zhao, C. X. Wu, J. Mao, F. Du, \emph{Eigenvalue comparisons in Steklov eigenvalue problem
and some other eigenvalue estimates}, Revista Matem\'{a}tica
Complutense {\bf 33} (2020) 389--414.






\end{thebibliography}
\end{document}